\newtheorem{theorem}{Theorem}
\theoremstyle{remark}
\newcommand{\Z}{\mathbb{Z}}
\newcommand{\R}{\mathbb{R}}
\renewcommand{\H}{{\mathbb H}}
\newcommand{\eps}{\varepsilon}
\newcommand{\be}{\begin{equation}}
\newcommand{\ee}{\end{equation}}
\newcommand{\old}[1]{}
\newcommand{\isom}{{\text{Isom}(\H)}}
\newcommand{\iid}{i.i.d.\,}
\begin{document}
\title{Right-angled hexagon tilings of the hyperbolic plane}
\author{R. Kenyon\thanks{Department of Mathematics, Brown University, Providence, RI 02912; research supported by the NSF grant DMS-1208191 and the Simons Foundation}}
\date{}
\maketitle

\section{Introduction}

In the spectrum of random planar structures studied in mathematics,
the most basic examples are lattice-based systems like random tilings of the Euclidean plane with polyominoes;
in these the randomness is encoded in the combinatorial arrangement of tiles. A similar basic setting
is in classical lattice statistical mechanics, where the randomness is encoded in the states of ``particles"
sitting at lattice sites (typically with a finite number of states), and interacting with nearest neighbors.

At the other end of the spectrum of random planar processes, one can consider random metrics on $\R^2$,
as in the case of Liouville quantum gravity,
where there are no local interactions or combinatorics.  

Between these two extremes of either pure combinatorics and fixed geometry, or no combinatorics and random geometry,
is the case of non-lattice statistical mechanics, 
in which particles are free to change position but still interact with each
other. Very few examples of this type have been successfully studied.

We discuss here a case of this intermediate type,
where the randomness is encoded in the spatial positions of interacting components.

We work in the hyperbolic plane $\H$, and our components are right-angled hexagons (RAHs)
of varying shape. The components ``interact" in such a way as to form global, edge-to-edge tilings
of the hyperbolic plane. One can take a different point of view on the same system
and consider the individual components to be
bi-infinite geodesics, which interact with each other so as to intersect orthogonally (and so that the complementary
components are hexagons). 

Other models of random geometric structures in $\H^2$ have been studied, see e.g. \cite{CW}. However in these
cases the underlying measures are simpler in a Markovian sense: once a geodesic is determined
the left and right half-space structures are independent. Although our model eventually boils down to a similar argument,
the analogous condition is a priori much less evident.

There is a periodic tiling with regular (all sides of equal length) RAHs, see Figure \ref{regulartiling}.
It is not at all clear that this structure is flexible in a way that preserve angles.

\begin{figure}[htbp]
\centering
\includegraphics[width=3in]{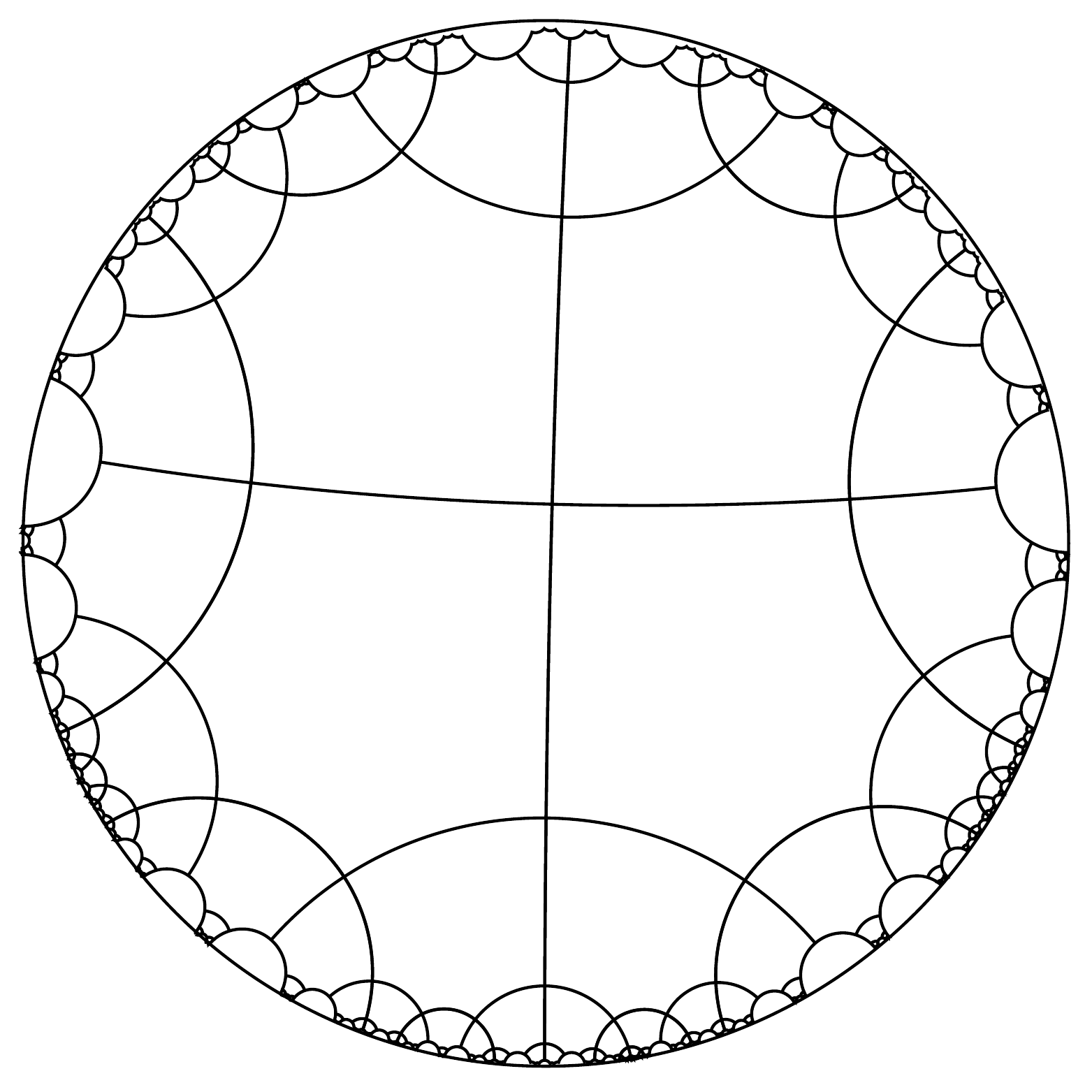} 
\caption{The tiling with regular RAHs.}
\label{regulartiling}
\end{figure}

Let $0\in\H$ denote the center of the Poincar\'e disk. 
Let $\Omega$ be the space of edge-to-edge tilings of the hyperbolic plane $\H$ with RAHs. It has a natural topology: two tilings $T_1,T_2$ are close
if there is a homeomorphism $\phi:\H\to\H$ which is $\eps$-close to the identity ($|\phi(z)-z|<\eps$)
on a large neighborhood of $0$.  The space $\Omega$ also comes with a natural
action of the isometry group of $\H$; it is thus a Riemann surface lamination, whose 
leaves are generically hyperbolic planes.
 
One way to explore $\Omega$ is to study invariant measures on it. Given a measure $\mu$ on RAHs one
can try to construct a measure on $\Omega$ for which the marginal (the induced measure on tiles) is 
given by $\mu$. We prove this for a family of measures $\mu$:

\begin{theorem} Given a probability measure $\nu$ on $\R_+$ of full support, let $\mu$ be the measure on RAHs 
whose edge lengths $\ell_1,\ell_3,\ell_5$ are independent and distributed according to $\nu$. 
Then there is an $\isom$-invariant probability measure on $\Omega$ of full support in which the tile containing the origin (and therefore every tile) has measure $\mu$. 
\end{theorem}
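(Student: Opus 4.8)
The plan is to reduce this two-dimensional construction to a one-dimensional stationary one by exploiting the geodesic structure of a RAH tiling, and then to promote the resulting symmetry to full $\isom$-invariance. First I would record the relevant hyperbolic trigonometry of a single tile: a right-angled hexagon is determined up to isometry by its three alternating side lengths $\ell_1,\ell_3,\ell_5$, and the three intermediate sides are recovered from the right-angled hexagon law of cosines,
\[
\cosh \ell_2 = \frac{\cosh\ell_5+\cosh\ell_1\cosh\ell_3}{\sinh\ell_1\sinh\ell_3},
\]
and cyclically for $\ell_4,\ell_6$. Thus $\mu$ is simply $\nu^{\otimes 3}$ transported along this parametrization. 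Next I would adopt the geodesic viewpoint emphasized in the introduction: since all angles are $\tfrac\pi2$, four hexagons meet at each vertex and opposite edges there are collinear, so the edges of any tiling in $\Omega$ extend to a locally finite family of complete geodesics crossing orthogonally, with complementary regions RAHs. The measure will be built in terms of this geodesic family.

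The core is a Markov description across a single geodesic. Fix a geodesic $\gamma$ of the tiling: it carries a point process of crossings, hence a bi-infinite sequence of edges with prescribed lengths, and on each side a sequence of hexagons abutting it. I would construct the law so that, conditioned on the data carried by $\gamma$ (the crossing positions and the lengths of the edges $\gamma$ meets), the configurations in the two complementary half-planes are independent, each produced by an independent half-plane filling kernel that grows the tiling away from $\gamma$ one layer at a time along the zig-zag boundary of the region built so far. This kernel must be designed so that every newly created tile has its three free alternating sides distributed as $\nu^{\otimes 3}$, while its remaining three sides, forced by the law of cosines above, agree with the edges already present.

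The main obstacle — and the reason the independence is, in the words of the introduction, ``a priori much less evident'' than in \cite{CW} — is precisely this matching. Every edge is shared by two hexagons, and when an edge is a determined side of a tile its length is dictated by that tile's three free sides; consistency then imposes a nonlinear relation coupling the free sides of the two neighbouring tiles, and a naive greedy gluing would be forced to condition the free sides on a positive-codimension event, destroying the marginal $\nu^{\otimes 3}$. The substantive work is to show that the law of cosines, together with the alternating choice of free sides, admits a reversible (detailed-balance) structure with respect to $\nu$: a half-plane filling kernel under which every tile is marginally $\mu$ and under which the two sides of $\gamma$ genuinely decouple given $\gamma$'s data. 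I expect this exact solvability of the gluing relation to be the crux; once it is established, the conditional independence across $\gamma$ follows, the finite-dimensional distributions become Kolmogorov-consistent, and a measure $P_\gamma$ on $\Omega$ is defined (a subsequential weak-limit argument over finite exhaustions can produce a candidate, but the exact marginal at every tile forces this exact-consistency route rather than a soft limit).

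Finally I would upgrade the symmetry to all of $\isom$. Stationarity of the along-$\gamma$ process gives invariance under translations along $\gamma$, and symmetry of the two filling kernels gives invariance under reflection in $\gamma$. Because the along-geodesic process and the kernels use only the intrinsic, homogeneous geometry of $\H$, the construction is equivariant, $\phi_* P_\gamma = P_{\phi\gamma}$ for $\phi\in\isom$; the conditional-independence property is then used to show that $P_\gamma$ can be reconstructed by conditioning on any other geodesic of the tiling, so $P_\gamma$ is independent of the seed $\gamma$. Hence $\phi_* P = P$ and the measure is fully $\isom$-invariant rather than merely $\mathrm{Stab}(\gamma)$-invariant. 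The marginal at the origin is $\mu$ by construction, and full support follows from that of $\nu$, since every finite admissible pattern of RAHs is produced by the kernels with positive probability; a final check of tightness and weak continuity ensures the extended object is carried by genuine locally finite tilings in $\Omega$.
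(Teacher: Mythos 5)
You have the right skeleton---decompose along the complete geodesics of the tiling, arrange conditional independence of the two half-planes given the data carried by a geodesic, and derive $\isom$-invariance from reversibility and equivariance of the construction---but the step you yourself flag as the crux (``I expect this exact solvability of the gluing relation to be the crux; once it is established\ldots'') is precisely the step you do not supply, and the mechanism you propose for it (a half-plane filling kernel growing the tiling one layer at a time along the zig-zag boundary of the region built so far) is not how the difficulty is resolved and would run straight into the consistency problem you describe: a tile in the second layer shares already-determined edges with two tiles of the first layer and with its neighbours in its own layer, and there is no evident way to give its three free sides the law $\nu^{\otimes 3}$ while matching the forced ones.

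The paper's missing idea is the \emph{hextree}: take the infinite $3$-valent tree, attach an independent $\nu$-distributed variable to each tree edge, and realize each tree vertex as a RAH whose alternating sides $\ell_1,\ell_3,\ell_5$ are the three incident edge variables, gluing neighbouring hexagons along the shared side. Because the three edges incident to a tree vertex are distinct edges of a tree, every tile automatically has marginal exactly $\mu=\nu^{\otimes 3}$ with genuinely independent alternating sides; the unglued sides $\ell_2,\ell_4,\ell_6$ concatenate into complete boundary geodesics; and the boundary data $(x_i,y_i)$ along such a geodesic is Markovian, since the configurations on the two sides of a cross-edge depend on disjoint sets of tree edges. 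The full tiling is then built by repeatedly attaching, across each boundary geodesic, a fresh hextree sampled from the regular conditional distribution given the boundary data (citing Kallenberg for existence of this conditional law)---no exactly solvable transition kernel is needed, only resamplability of the conditional measure, and the reversibility and symmetry of the whole procedure give the invariance. Finally, you wave at ``tightness and weak continuity,'' but there is a genuine covering/local-finiteness issue to address: with $\nu$ of full support edges can be arbitrarily short, and one must rule out accumulation of vertices. The paper handles this with a separate reversibility argument showing that consecutive cross-edge lengths along a geodesic are as likely to increase as to decrease, so an infinite decreasing sequence has probability zero and $\H$ is covered.
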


An analogous result holds for tilings with right-angled pentagons (Section \ref{RAPs}) and presumably
for $k$-gons with $k>6$ as well. For shapes with fewer sides, such as quadrilaterals with angles $2\pi/5$, say,
we have been unable to formulate a similar result, probably because the dimension of the space of shapes is too small. 

One way to state the invariance under $\isom$ of a measure on $\Omega$ is as follows.
Take a $\mu$-random tiling. Pick an isometry $\phi$ of $\H$ independent of the tiling. Applying $\phi$ to the tiling results in a
new $\mu$-random tiling. In fact, since all tiles have the same area, the random tiling ``looks the same around any tile'', in the following sense.
Choose from the tiling any tile in a combinatorial manner, that is,
which does not depend on the geometry of the current tiling, except for the information about which tile contains the origin.
Translate this tile so that the origin $0\in\H$ is at a uniform random location in it, and then choose a random
rotation fixing the origin. Then the result is
another exact sample of the measure. 

There are leaves of $\Omega$ which are closed Riemann surfaces, corresponding to RAH tilings which are periodic
under the action of a cocompact subgroup of isometries. These leaves come in families whose dimension is
(as one can show) half the dimension of the Teichmuller space of the associated Riemann surface. 
Each such a family supports many isometry-invariant measures, supported on group-invariant RAH-tilings. 
We will not address here the question of what natural 
measures are supported on these subspaces; we are concerned rather 
with measures of \emph{full support} in $\Omega$.

Another motivation for studying the space $\Omega$ comes from integrable systems. There is a surprising
connection between the geometry of an RAH and the Yang-Baxter equation for the Ising model. The Yang-Baxter
equation, or star-triangle equation, for the Ising model is a local rearrangement of a graph preserving
the so-called partition function of the model. 
The Yang-Baxter equation is a hallmark of an underlying integrable system, and for the Ising model
one which has been studied in \cite{KP}. 
One can translate the Ising integrable structure into an integrable structure on certain spaces of surfaces tiled by RAHs.
The measures we discuss below
are relevant to this system: the ``trigonometric" configurations here are in fact fixed points of the system, and the measure 
$\mu_0$ discussed below is an invariant measure.  This will be the subject of a forthcoming work. 

\old{
One can (conjecturally) translate the Ising integrable structure into an integrable structure on the space $\Omega$.
There is in fact a proof of existence of this integrable structure on 
many of the families parametrizing fixed-genus surfaces mentioned in the previous paragraph. The measures we discuss below
are relevant to this system: the ``trigonometric" configurations are in fact fixed points of the system, and the measure 
$\mu_0$ is an invariant measure.  This will be the subject of a forthcoming work. }
\medskip

\noindent{\bf Acknowledgements.} We thank Oded Schramm and Andrei Okounkov for helpful discussions,
and the referee for several suggestions for improvments.

\section{Measures on RAHs}
\subsection{Hexagons}
The side lengths $(\ell_i)_{i=1,\dots,6}$ of a right-angled hexagon satisfy the following relation.
Let $(a,B,c,A,b,C)=(e^{\ell_1},\dots,e^{\ell_6})$. If $a,b,c$ are known (but arbitrary in $(0,\infty)$)
then the remaining three are determined by
\begin{eqnarray}\frac{A+1}{A-1} &=& \sqrt\frac{(1+abc)(a+bc)}{(b+ac)(c+ab)}\nonumber\\
\frac{B+1}{B-1} &=&\sqrt\frac{(1+abc)(b+ac)}{(a+bc)(c+ab)}\label{ABC}\\
\frac{C+1}{C-1} &=& \sqrt\frac{(1+abc)(c+ab)}{(a+bc)(b+ac)}.\nonumber
\end{eqnarray}
It is straightforward to see that the map $\Psi\colon(a,b,c)\mapsto(A,B,C)$ is an involution.
In particular the family of RAHs is homeomorphic to $\R_+^3$, parametrized by three non-adjacent edge lengths.

One particularly nice family of hexagons are the \emph{trigonometric} ones,
where 
$$1-a-b-c-ab-ac-ab+abc=0,$$
or equivalently 
$$a=\tan\alpha,~~b=\tan\beta,~~c=\tan\gamma,$$
and $\alpha+\beta+\gamma=\frac{\pi}4.$
In this case opposite sides have equal lengths ($A=a,~B=b,~C=c$, as one can check from (\ref{ABC})).
One can define a measure on trigonometric RAHs by choosing, for example,  $\alpha,\beta,\gamma$
uniformly with respect to Lebesgue measure on the simplex $\{\alpha+\beta+\gamma=\frac{\pi}4\}.$

A more interesting measure $\mu_0$, of full support in $\R^3_+$, 
is given by choosing $\ell_1,\ell_3,\ell_5$ independently and each distributed
with respect to the measure $\nu_0$ on $(0,\infty)$ with density
\be\label{nudef}
F(\ell)\,d\ell = \frac{C\,d\ell}{\sinh^{1/3}(\ell)}
\ee
where $$C=\frac{\sqrt{3} \Gamma(\frac{2}{3}) \Gamma(\frac{5}{6})}{2 \pi ^{3/2}}.$$
In terms of $u=e^\ell\in(1,\infty)$ the density is 
\be\label{udef}f(u)\,du= \frac{2^{1/3}C\,du}{u^{2/3}(u^2-1)^{1/3}}.\ee

The truly remarkable property of this measure is 
\begin{theorem}\label{rahmsr} If $\ell_1,\ell_3,\ell_5$ are chosen i.i.d. with distribution $\nu_0$ then $\ell_2,\ell_4,\ell_6$ will be i.i.d. 
with distribution $\nu_0$ as well.
\end{theorem}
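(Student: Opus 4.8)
The plan is to recognize that Theorem~\ref{rahmsr} is equivalent to a single Jacobian identity for the involution $\Psi$. Writing $f$ for the density in (\ref{udef}) and recalling from $(a,B,c,A,b,C)=(e^{\ell_1},\dots,e^{\ell_6})$ that $(a,b,c)=(e^{\ell_1},e^{\ell_5},e^{\ell_3})$ while $(A,B,C)=(e^{\ell_4},e^{\ell_2},e^{\ell_6})$, the assertion that $\ell_1,\ell_3,\ell_5$ i.i.d.\ $\nu_0$ forces $\ell_2,\ell_4,\ell_6$ i.i.d.\ $\nu_0$ is precisely the statement that $\Psi$ preserves the product measure $g\,da\,db\,dc$ with $g(a,b,c)=f(a)f(b)f(c)$. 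Since $\Psi$ is an involution, this is equivalent to the pointwise identity $g=(g\circ\Psi)\,|\det D\Psi|$; and because $g$ factors, verifying this identity simultaneously yields both that the image coordinates are identically distributed and that they are independent. Using $f(u)\propto\big(u^2(u^2-1)\big)^{-1/3}$, the target reads
\[
|\det D\Psi|^3=\frac{(ABC)^2(A^2-1)(B^2-1)(C^2-1)}{(abc)^2(a^2-1)(b^2-1)(c^2-1)}.
\]

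To compute the Jacobian I would factor $\Psi$ through the variables $r_A=\tfrac{A+1}{A-1}$, $r_B$, $r_C$, which by (\ref{ABC}) satisfy $r_A^2=\tfrac{Pq_a}{q_bq_c}$ and cyclically, where I abbreviate $P=1+abc$, $q_a=a+bc$, $q_b=b+ac$, $q_c=c+ab$. The map $(r_A,r_B,r_C)\mapsto(A,B,C)$ is coordinatewise with $dA/dr_A=-2/(r_A-1)^2$, so its Jacobian is elementary; the remaining task is $\det M$ for $M=\big(\partial\xi_i/\partial x_j\big)$, where $\xi_A=\log r_A=\tfrac12(\log P+\log q_a-\log q_b-\log q_c)$ and cyclically and $(x_1,x_2,x_3)=(a,b,c)$. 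The point of the logarithmic variables is that $2M=SN$, a constant $3\times4$ sign matrix $S$ times the $4\times3$ matrix $N$ of derivatives of $(\log P,\log q_a,\log q_b,\log q_c)$.

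The main step is a Cauchy--Binet expansion of $\det(SN)$ over the four ways of dropping one of $P,q_a,q_b,q_c$. The four $3\times3$ minors of $N$ are small enough to evaluate by hand: dropping $P$ gives $D_0=1-a^2-b^2-c^2+2abc$, while dropping $q_a,q_b,q_c$ gives $\pm\bar q_b\bar q_c,\ \pm\bar q_a\bar q_c,\ \pm\bar q_a\bar q_b$ with $\bar q_a=a-bc$, etc. Assembling these against the maximal minors of $S$ (all of magnitude $4$, with alternating sign) reduces everything to the polynomial identity
\[
-P\,D_0+q_a\bar q_b\bar q_c+q_b\bar q_a\bar q_c+q_c\bar q_a\bar q_b=-(1-a^2)(1-b^2)(1-c^2),
\]
which I expect to be the crux and which one checks by direct expansion of both sides. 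This yields the clean formula $\det M=\dfrac{(a^2-1)(b^2-1)(c^2-1)}{2\,P\,q_aq_bq_c}$ up to sign.

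Finally I would assemble the pieces. The one further algebraic input is the factorization $Pq_a-q_bq_c=a(1-b^2)(1-c^2)$, giving $r_A^2-1=\tfrac{a(1-b^2)(1-c^2)}{q_bq_c}$ and hence $A^2-1=4r_A/(r_A-1)^2$, together with $r_Ar_Br_C$ and $(r_A-1)(r_B-1)(r_C-1)$, all back in terms of $a,b,c$. Substituting $\det M$ and these relations into $\det D\Psi=\det\tfrac{\partial(A,B,C)}{\partial(r_A,r_B,r_C)}\cdot\det\tfrac{\partial(r_A,r_B,r_C)}{\partial(a,b,c)}$ and simplifying should reproduce exactly the displayed cube-root ratio, completing the proof. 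I expect the determinant evaluation, and in particular the polynomial identity above, to be the only genuinely delicate point; everything else is bookkeeping, and it is reassuring that the exponent $1/3$ in (\ref{udef}) is precisely what matches the three factors produced by the three-variable determinant.
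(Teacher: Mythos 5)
Your reduction to the single Jacobian identity $|\det D\Psi|^3=\frac{(ABC)^2(A^2-1)(B^2-1)(C^2-1)}{(abc)^2(a^2-1)(b^2-1)(c^2-1)}$ is exactly the paper's proof, which stops at that point with the remark that ``this is a computation.'' Your sketch of the computation itself is correct as well: the polynomial identity $-P\,D_0+q_a\bar q_b\bar q_c+q_b\bar q_a\bar q_c+q_c\bar q_a\bar q_b=-(1-a^2)(1-b^2)(1-c^2)$ and the factorization $Pq_a-q_bq_c=a(1-b^2)(1-c^2)$ both check out, and assembling the pieces as you describe (using $r_A^2r_B^2r_C^2=P^3/(q_aq_bq_c)$ and $(r_A^2-1)(r_B^2-1)(r_C^2-1)=abc\,[(1-a^2)(1-b^2)(1-c^2)]^2/(q_aq_bq_c)^2$) does reproduce the required cube-root ratio.
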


\begin{proof} This is a computation: one needs to show that 
$$f(a)f(b)f(c)\,da\,db\,dc=f(A)f(B)f(C)\,dA\,dB\,dC$$ that is,
that the Jacobian of the mapping from $(a,b,c)$ to $(A,B,C)$ defined by (\ref{ABC})
is 
$$\left(\frac{\partial_{A,B,C}}{\partial_{a,b,c}}\right) = \frac{f(a)f(b)f(c)}{f(A)f(B)f(C)}.$$
\end{proof}

One can ``discover" the distribution $\mu_0$ as follows.
If we assume $a,b,c$ are i.i.d.\! with some density $g(x)dx$ then $A,B,C$ will be distributed
according to the density
$$\Psi_*(g(a)g(b)g(c)da\,db\,dc)=g(a)g(b)g(c)\left(\frac{\partial_{a,b,c}}{\partial_{A,B,C}}\right)dA\,dB\,dC.$$
Massaging the right-hand side of 
this expression into a form separating out the $A,B$ and $C$ dependence, one sees that
there is a unique choice of $g$ (up to scale), given by (\ref{udef}), for which it can be written as $g(A)g(B)g(C)dA\,dB\,dC$.
It follows that $\mu_0$ is the only probability measure with this ``independence-preserving" property.
\medskip

\noindent{\bf Question.} Does this probability measure $\mu_0$ have a geometric significance? 
\medskip

One can of course choose any other measure $\nu$ on $\R_+$ and define a corresponding measure 
$\mu=\mu(\nu)$ on RAHs by choosing lengths $\ell_1,\ell_3,\ell_5$ independent and $\nu$-distributed.
The remaining lengths  $\ell_2,\ell_4,\ell_6$ are then determined (and will have the same 
marginal distribution as each other but will not be independent and typically not
distributed according to $\nu$). The resulting
measure $\mu=\mu(\nu)$ on RAHs is invariant under rotation by two indices. 

\section{Tiling}
\subsection{Hextrees}
We will build an $\isom$-invariant probability measure $\lambda$ on $\Omega$ with the property that the 
marginal distribution of a tile (the probability measure restricted to any single tile)
is $\mu(\nu)$ for some $\nu$ as above. 
The building blocks of this measure are collections of RAHs glued together in a ternary-tree structure
as in Figure \ref{hextree}. We call such sets \emph{hextrees}.
\begin{figure}[htbp]
\centering
\includegraphics[width=3in]{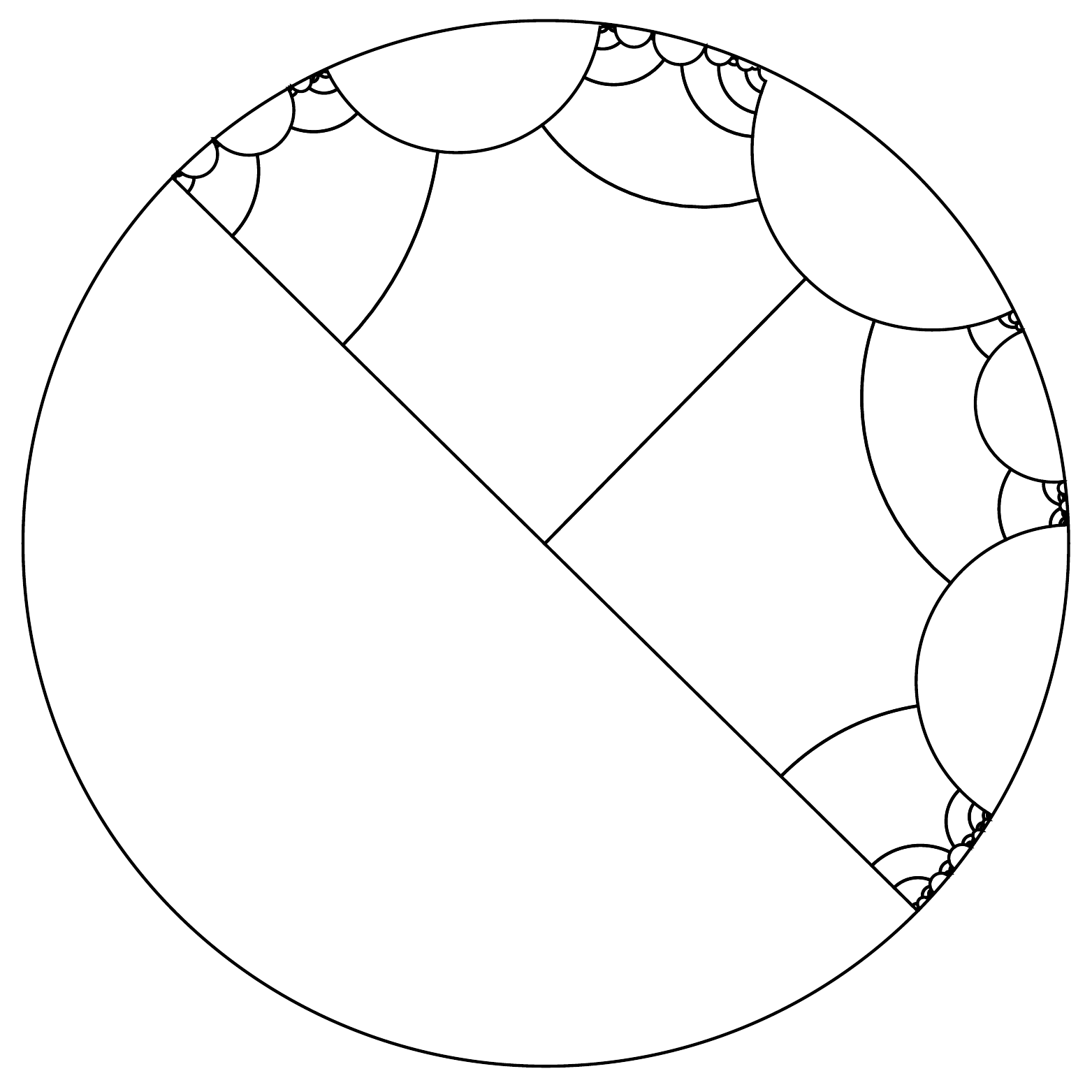} 
\caption{A random hextree in $\H$. For the purposes of illustration this one was constructed using a measure on side lengths which is more tame than $\nu_0$.}
\label{hextree}
\end{figure}
That is, for each edge of the infinite regular degree-$3$ tree take an independent random real
variable distributed according to $\nu$.
Then, for each vertex of the tree, use the three adjacent random variables as lengths of edges $1,3,5$ of a RAH. These RAHs
are glued together so that RAHs at adjacent vertices are glued along the corresponding edge. 
We refer to these glued edges as the cross-edges of the hextree, since they cross between boundary components.

\subsection{Gluing}
Now the construction of the tiling in $\Omega$ is as follows. Start with a hextree as above; 
for each of its boundary geodesics construct another
hextree, conditioned on having a boundary with the same subdivision points as the first, but otherwise independent of the first 
(we discuss how this is accomplished below); then glue these
hextrees along their common boundary so that the individual RAHs meet edge-to-edge. 
We continue gluing hextrees to existing boundaries in this way, always choosing independent new hextrees conditioned to agree along the
common boundary. 

To show that the result is a tiling in $\Omega$, it remains to show all of $\H$ is covered.
This is proved in section \ref{coveringsection} below.

\begin{figure}[htbp]
\centering
\includegraphics[width=3in]{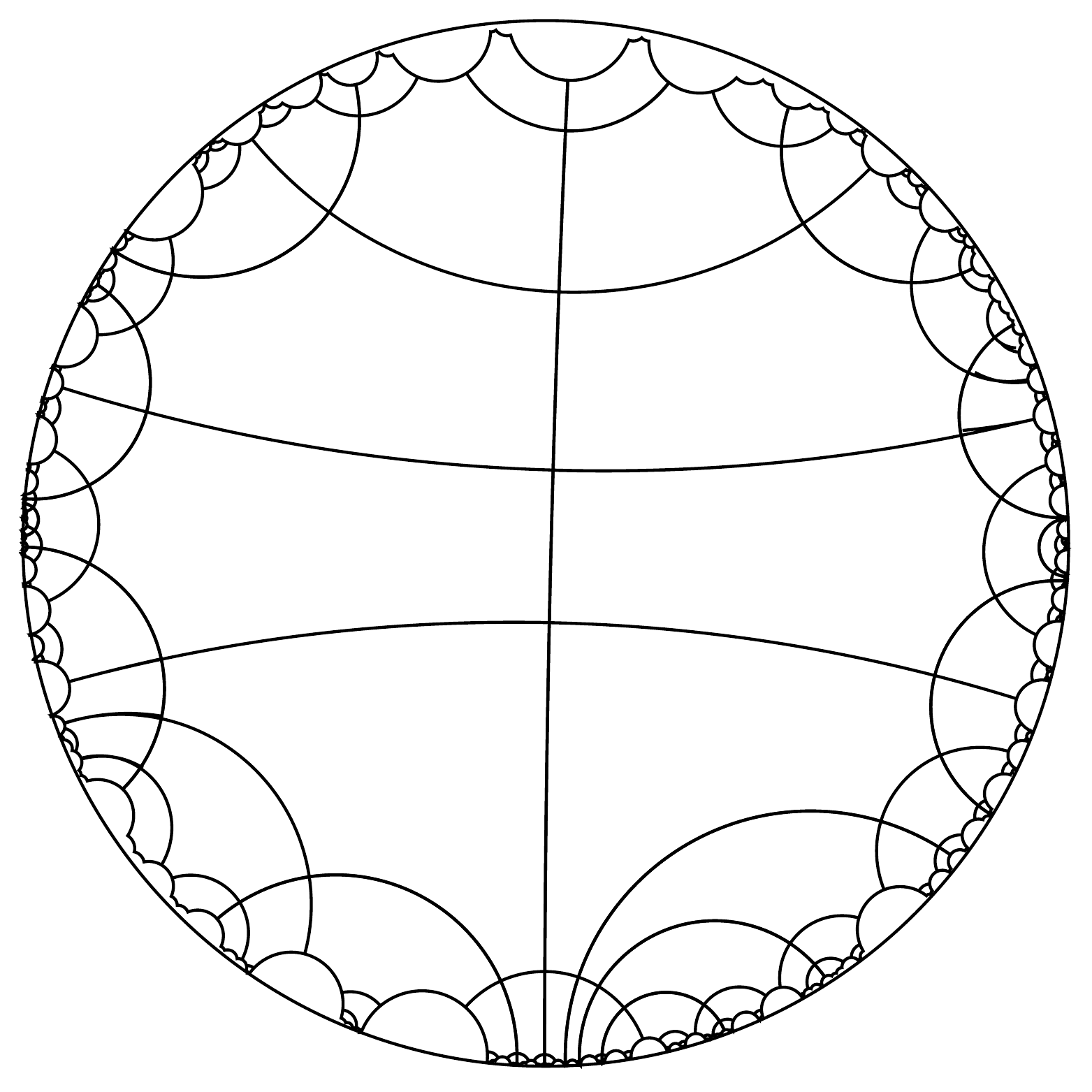} 
\caption{A RAH tiling. For this figure we used the measure $\nu$ on edge lengths 
with density (for $u=e^\ell$) $f(u)=C(u-1)e^{a_0(1-u)}du$, with mean $4$.}
\label{RAHtiling}
\end{figure}

How does one construct a hextree with the same boundary data (along one boundary geodesic) as a given hextree?
Given a hextree $R$ let $Y$ be the boundary data
(the sequence of edge lengths) along one boundary of $R$, and $X$ be the remaining lengths defining the rest of the hextree.

There is a general theorem which applies in this situation \cite{Kal}:  given a joint random variable $(X,Y)\in\Omega_1\times\Omega_2$
(that is, a random variable in a product space), there is a conditional probability measure $(X,Y)|Y$ obtained by conditioning
on $Y$; a random sample $(X',Y)|Y$ of this measure can be chosen independently of $(X,Y)$ conditional on $Y$. 
So to glue a hextree to another
we just resample this conditional measure.
In practice this resampling is complicated; see section \ref{sampling} below. 

The M\"obius invariance of the resulting measure is a consequence of the reversibility of the gluing procedure: each new hextree is distributed
according to the same measure $\mu$, and the joint probability measure on the glued object is symmetric: 
one cannot tell where the construction began and in what order the hextrees were glued. 
Furthermore, within each hextree each RAH has the same distribution, with edges $1,3,5$ which are
independent, so there is no information about which RAH was the ``first". In particular,
as mentioned earlier, if we choose any RAH in the tiling 
translate it so that the origin $0\in\H$ is at a uniform random location in that RAH, the result is
another exact sample of $\lambda$.
\medskip

\noindent{\bf Question.} Is the measure on tilings constructed from the above measure $\mu_0$ characterized by some natural
property like entropy maximization?
\medskip

\noindent{\bf Question.} 
Is there an $\isom$-invariant measure on $\Omega$ supported on trigonometric RAHs?

\subsection{Covering property}\label{coveringsection}
To show that the union of hextrees covers all of 
$\H$, we need to show that hextrees are typically ``thick'': if some
cross-edge of a hextree is short then along the geodesic containing this
short cross-edge the other cross-edges of nearby hextrees are typically no shorter. 

This follows from the reversibility of the construction
of the tiling. Let $x_0$ be a cross-edge. Choose one of the two cross-edges adjacent to it on the same geodesic.
This cross-edge $x_1$ is just as likely
to be greater than $x_0$ as smaller than $x_0$, by reversibility. Similarly for the next cross-edge
$x_2$, and so on. So the chance of getting a sequence of smaller and smaller
cross-edges starting from $x_0$ goes to zero with the length of this sequence. 

Thus along the ``cross-edge geodesics" there are no accumulation points of vertices of RAHs,
and so all of $\H$ is covered.

\subsection{Resampling}\label{sampling}

We are given a hextree $R$ with boundary data $Y=\{\dots,y_{-1},y_0,y_1,\dots\}$, and wish to construct another, independent of $R$
conditional on having the same boundary data. It suffices to construct only the layer of RAHs adjacent to the boundary,
since the remaining RAHs can be chosen by choosing their cross-edge lengths independently of everything 
as in the original definition of $\mu$. 
To construct the RAHs adjacent to the boundary $Y$, it suffices to construct the sequence $x_i$ of
edge lengths of the edges incident to $Y$ 
(ending on $Y$ and perpendicular to it, so that $x_i$ ends between $y_i$ and $y_{i+1}$)
since the three consecutive edges $x_{i-1},y_i,x_i$ will determine the RAH at position $i$. 

We construct the $x_i$ using a successive approximation procedure, 
starting initially from the corresponding lengths $x_i'$ of $R$,
that is, from the edge lengths of the edges of $R$ incident to $Y$ from the other side. 

We use the fact that the sequence $\{(x_i,y_i)\}$ is Markovian: given one pair $(x_i,y_i)$ the values $(x_j,y_j)$ for $j>i$ are independent
of the values $(x_j,y_j)$ for $j<i$. This follows from the construction of $\mu$ from 
the sequence of independent cross-edge lengths:
the values $(x_j,y_j)$ for $j>i$ depend only on the choices of cross-edge lengths on one side of $x_i$,
the values $(x_j,y_j)$ for $j<i$ depend only on the cross-edge lengths on the other side of $x_i$.

From the sequence $\{x'_i\}_{i\in\Z}$, erase all $x'_{2i}$ for all $i$. Then for each $i$, resample $x'_{2i}$ according to the conditional
measure defined by $x'_{2i-1}, y_{2i-1},y_{2i},x'_{2i+1}$. By this we mean, by the Markov property, the marginal distribution of $x'_{2i}$
(given everything else) only depends on these four values. Resampling each $x'_{2i}$  is then a straightforward finite-dimensional computation.

Now repeat for the odd indices: erase each $x'_{2i+1}$ and resample it according to its four neighboring values. 
Now iterate, resampling successively the even values then the odd values over and over. The resampling map
preserves the measure $\mu$ conditioned on $Y$, and the $x_i$'s will quickly become decorrelated from their initial values,
that is, after a small number of iterations the state is independent of the initial state. 
To be a little more precise, if we are interested in functions of $\{x_i\}$
which depend only on indices in the interval $[-m,m]$, then after a polynomial number (in $m$) 
of iterations the system is close to uncorrelated.
These facts have been established in \cite{BRS} in a similar setting.

Although it should be possible to show that the dependence of $x_i$ on $y_j$ decays exponentially
with $|i-j|$, we have not attempted to prove this here.

\section{Pentagons} \label{RAPs}
Let $\ell_1,\dots,\ell_5$ be the side lengths of a right-angled pentagon (RAP).
Any two of these determine the rest: we have for example
$$\cosh\ell_4 = \sinh\ell_1\sinh\ell_2,$$
and its cyclic rotations. Explicitly, given $\ell_1,\ell_2$, which must satisfy $\sinh\ell_1\sinh\ell_2>1$, we have
\begin{eqnarray*}
\sinh\ell_3 &=& \frac{\cosh\ell_1}{\sqrt{\sinh^2\ell_1\sinh^2\ell_2-1}}\\
\sinh\ell_4 &=& \sqrt{\sinh^2\ell_1\sinh^2\ell_2-1}\\
\sinh\ell_5 &=& \frac{\cosh\ell_2}{\sqrt{\sinh^2\ell_1\sinh^2\ell_2-1}}.
\end{eqnarray*}
 
There is a natural measure $\tau$ on right-angled pentagons (RAPs) which is invariant under rotations,
defined as follows (one can discover this measure in a similar manner as one discovers
the measure $\nu_0$ as discussed after the proof of Theorem \ref{rahmsr}). If we parametrize by $\ell_1,\ell_2$, the density is
$$f(\ell_1,\ell_2)\, d\ell_1\, d\ell_2 = \frac{C\,d\ell_1\, d\ell_2}{\sqrt{\sinh^2\ell_1\sinh^2\ell_2-1}}=\frac{C\,d\ell_1\, d\ell_2}{\sinh\ell_4}$$
where $C=K(1/\sqrt{2})^2$, $K$ being the complete elliptic integral of the first kind. For this
measure each edge has expected length $\pi/2$. 

In terms of exponential of the lengths we have (with $a=e^{\ell_1},$ and  $b=e^{\ell_2}$)
$$f(a,b)\,da\,db= \frac{\frac14C\,da\, db}{\sqrt{(ab+a+b-1)(ab-a+b+1)(ab+a-b+1)(ab-a-b-1)}}.$$

As in the case of right-angled hexagons, one can construct an $\isom$-invariant measure on tilings 
of $\H$ with RAPs whose marginal distribution is $\tau$, as follows. 

Gluing four RAPs edge-to-edge around a vertex produces a right-angled octagon after merging four pairs of edges.
The lengths of its four edge lengths $\ell_1,\ell_3,\ell_5,\ell_7$ opposite to the central vertex (which were originally edges of pentagons)
satisfy one relation:
\be\label{cccc}\cosh\ell_1\cosh\ell_5 = \cosh\ell_3\cosh\ell_7.\ee
Given four lengths satisfying this relation, there is a one-parameter
family of choices for the central four pentagon edges. 

We can glue such octagons together along the $\ell_{i}$ for $i$ odd, choosing lengths at each glued edge which are independent
subject to (\ref{cccc})---as explained below---to make a thickened
$4$-valent tree, an ``octatree". Then we can proceed as above, gluing octatrees to make a tiling of $\H$.

To choose the edge lengths of the cross-edges of the $4$-valent tree, subject to (\ref{cccc}) at each vertex, proceed as follows.
Take a bi-infinite straight path (not turning left or right at a vertex) in the tree through a fixed vertex $v$; 
assign values to its edges which are \iid and distributed
according to the $\cosh$ of the edge lengths. Assign the 
edges along the other straight path through $v$ also \iid except that the values on the two edges adjacent to $v$ must have product equal
to the product of values on the other two edges adjacent to $v$. Now proceed outwardly from $v$;
along each new bi-infinite geodesic encountered, assign values on it \iid subject to the condition (\ref{cccc}) at each vertex.
Since this is a tree the reversibility of the assignments guarantees that the resulting object is $\isom$-invariant.

\old{
we first orient the edges of the $4$-valent tree so that each vertex has exactly one outgoing edge. There are many ways to 
do this, in fact they are in bijection with the ends of the tree (just direct each edge towards the end).
Now choose edge lengths recursively: given three edge lengths of edges
coming in to a vertex, the fourth length is determined uniquely by (\ref{cccc}). 
Of course one has to start somewhere. Start with a finite tree, consisting of the neighborhood of the origin of radius $R$.
Root this tree at some leaf;
and choose independently the values on the non-root edges, according to the measure $\nu$. Then use (\ref{cccc}) to recursively
define the remaining edge lengths. Now take a limit as $R\to\infty$. Each edge length will be distributed as $\nu$,
and at each vertex $v$, if we consider the four subtrees out of $v$, the values are independent subject to the single
identity (\ref{cccc}) at $v$.
}
%


\end{document}